\newtheorem{theorem}{Theorem}[section]
\newtheorem{lemma}[theorem]{Lemma}
\newtheorem{corollary}[theorem]{Corollary}
\theoremstyle{remark}
\newtheorem{remark}{Remark}
\date{\today}
\begin{document}

\title[Products of Commutators of Finite Order Elements]{Expressing Finite-Infinite Matrices Into Products of Commutators of Finite Order Elements}

\author{Ivan Gargate}
\address{UTFPR, Campus Pato Branco, Rua Via do Conhecimento km 01, 85503-390 Pato Branco, PR, Brazil}
\email{ivangargate@utfpr.edu.br}

\author{Michael Gargate}
\address{UTFPR, Campus Pato Branco, Rua Via do Conhecimento km 01, 85503-390 Pato Branco, PR, Brazil}
\email{michaelgargate@utfpr.edu.br}

\begin{abstract}

Let $R$ be an associative ring with unity $1$ and consider $k\in \mathbb{N}$ such that $1+1+..+1=k$ is invertible.  Denote by $\omega$ an arbitrary kth root of unity in $R$ and let $UT^{(k)}_{\infty}(R)$ be the group of upper triangular infinite matrices whose diagonal entries are $k$th roots of $1$. We show that every element of the group $UT_{\infty}(R)$ can be expressed as a product of $4k-6$ commutators all depending of powers of elements in $UT^{(k)}_{\infty}(R)$ of order $k$. If $R$ is the complex field or the real number field we prove that, in $SL_n(R)$ and in the subgroup $SL_{VK}(\infty,R)$ of
the Vershik-Kerov group over $R$, each element in these groups can be decomposed into a product of at most $4k-6$ commutators of elements of order $k$.
\end{abstract}

%\subjclass[2010]{Primary 16S50, 17C50; Secondary 16W10}

\keywords{Upper triangular matrices; finite order; commutators; Vershik-Kerov group }

\maketitle

\section{Introduction}\label{intro}
Expressing matrices as a product of involutions was studied by several authors a few years ago. For example Halmos in \cite{Halmos} proved that every square matrix over a field, with determinant $\pm 1$, is the product of not more than four involutions and Solwik in \cite{Slowik-1} proved that for any field, every element of the group  of upper triangular infinite matrices whose entries lying on the main diagonal are
equal to either $1$ or $-1$ can be expressed as a product of at most five involutions.

Following the same direction, there are works to express matrices as the product of commutators of matrices. For example Zheng  in \cite{Zheng} proved that every matrix $A$ in $SL_n(F)$ is a product of at most two commutators of involutions, where $F$ is the complex number field or the real number field and  Hou in \cite{Hou} proved that the group of upper triangular infinite matrices whose entries lying on the main diagonal are equal to 1 can be expressed as a product of at most two commutators of involutions.
 
 Recently Slowik in \cite{Slowik2} and Grunenfelder in \cite{Grunenfelder} study when a matrix can be expressed as a product of fixed order matrices. 
 
 In this paper, the authors generalize the work done by Hou in \cite{Hou} about the group $ UT^{(k)}_n(R)$ and $UT^{(k)}_{\infty}(R)$ of matrices whose elements in $T_n(R)$ and $T_{\infty}(R)$ respectively have entries in the diagonal of order $k$,  obtaining results of when a matrix is the product of commutators of matrices  of a fixed order.

 The main result of this paper is stated as follows:
 
 \begin{theorem}\label{th1}
Assume that $R$ is an associative ring with unity $1$ and that $1+1+\cdots+1=k$ is an invertible element of $R$. Then every element of the group $UT_{\infty}(R)$ and $UT_n(R)$ $(n\in \mathbb{N})$ can be expressed as a product of at most $4k-6$ commutators of elements of order $k$ in $UT^{(k)}_{\infty}(R)$ and $UT^{(k)}_n(R)$, respectively.
\end{theorem}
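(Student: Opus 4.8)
The plan is to build everything out of one fixed primitive $k$th root of unity $\omega$ and to exploit the single ring-theoretic fact that makes the hypothesis ``$k$ invertible'' bite: from $\prod_{j=1}^{k-1}(x-\omega^{j}) = 1+x+\cdots+x^{k-1}$, evaluated at $x=1$, one gets $\prod_{j=1}^{k-1}(1-\omega^{j})=k$, so each $1-\omega^{j}$ $(1\le j\le k-1)$ is invertible in $R$. This invertibility is exactly what will let me solve the linear equations that place prescribed entries, and it simultaneously guarantees a rich supply of order-$k$ elements.

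First I would pin down the working family $\mathcal{O}$ of order-$k$ elements: upper triangular matrices whose diagonal consists of powers of the single $\omega$, arranged so that any two diagonal entries that can interact differ (so that $\omega^{s_i}-\omega^{s_j}=\omega^{s_j}(\omega^{s_i-s_j}-1)$ is invertible), with otherwise \emph{free} strictly upper part. Because the relevant diagonal differences are invertible, such a matrix is conjugate by a unitriangular matrix to its diagonal, hence has order exactly $k$ once a primitive power of $\omega$ appears on the diagonal; and since all diagonal entries are powers of the \emph{same} $\omega$, the diagonal parts commute, so $[g,h]$ for $g,h\in\mathcal{O}$ is automatically unitriangular, i.e.\ lands in $UT_\infty(R)$. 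Two commutator identities will then be in constant use, namely $[xy,z]={}^{x}[y,z]\,[x,z]$ and $[x,yz]=[x,y]\,{}^{y}[x,z]$, together with the fact that order is a conjugacy invariant, so that a conjugate of an $\mathcal{O}$-commutator is again an $\mathcal{O}$-commutator.

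The computational heart is the scaling identity. For a diagonal $d=\mathrm{diag}(\omega^{c_i})\in\mathcal{O}$ and $h\in\mathcal{O}$ one has $[d,h]=(dhd^{-1})h^{-1}$, and on the band $j-i=b$ conjugation by $d$ multiplies the $(i,j)$ entry by $\omega^{c_i-c_j}$; choosing the periodic pattern $c_i\equiv i\pmod k$ makes this factor $\omega^{-b}$, which is invertibly different from $1$ exactly when $b\not\equiv 0\pmod k$. Thus a \emph{single} commutator $[d,h]$, with $h$ carrying a full free strictly-upper part, can be made to realize \emph{all} entries lying on bands $b\not\equiv0\pmod k$ at once; this is what replaces the naive one-elementary-matrix-per-commutator count and is essential for the infinite case. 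The residue-zero bands $(b\equiv0\pmod k)$ are the genuine obstruction, since no single $k$-periodic diagonal separates them. I would clear them by an infinite ``staircase'' telescoping, a unitriangular change of variables that shifts band $b$ to band $b\pm1$, so that after conjugation the leftover again has its nonzero band in a separated residue class, and then repeat; cycling through the available scaling patterns costs on the order of $k$ stages. Converting the unitriangular staircase conjugators back into $\mathcal{O}$-elements via the two commutator identities, and bookkeeping the boundary stages that can be merged, is designed to collapse the total to exactly $4k-6$ commutators, which correctly reads $2$ at $k=2$ and recovers Hou. For $UT_n(R)$ the band index is bounded by $n$, the telescoping terminates after finitely many steps, and the same construction yields a bound no larger than the infinite one.

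The main obstacle I anticipate is the tension between \emph{finite order} and \emph{solvability}: I must keep both factors of every commutator of order exactly $k$ while retaining enough freedom to hit arbitrary prescribed entries, and the family $\mathcal{O}$ (diagonalizable root-of-unity diagonal with a free unipotent part) is precisely what reconciles the two. The second, and I expect harder, difficulty is purely infinite-dimensional: covering \emph{unbounded} bandwidth, in particular all bands that are multiples of $k$, with only $O(k)$ commutators rather than infinitely many. This is where the staircase telescoping must be arranged so that the recursion genuinely closes, and it is also where the precise constant $4k-6$ (as opposed to merely ``$O(k)$'') has to be extracted by a careful count of the stages and of the commutator-identity expansions.
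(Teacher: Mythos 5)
Your reduction of an arbitrary $T\in UT_{\infty}(R)$ by one ``scaling'' commutator is essentially sound, but only in the following weaker form: solving $(dud^{-1})u^{-1}=T$ band by band, the coefficient of the unknown $u_{ij}$ on band $b=j-i$ is $\omega^{-b}-1$, so you can prescribe $[d,u]$ on the bands $b\not\equiv 0\pmod k$ but must accept whatever appears on the bands divisible by $k$; thus one commutator only achieves $T=[d,u]\cdot L$ with the leftover $L$ supported on bands $k,2k,3k,\dots$ (it cannot equal such a $T$ exactly, since the equations on the residue-zero bands impose compatibility constraints that a generic $T$ violates). The fatal gap is what you do with $L$. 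Your ``staircase telescoping'' is supposed to shift band $b$ to band $b\pm 1$ by conjugation, but no such conjugation exists: for any upper triangular $g=\delta u$ ($\delta$ diagonal, $u$ unitriangular), the lowest nonzero band of $gTg^{-1}-I_{\infty}$ is the same as that of $T-I_{\infty}$, because $u$ disturbs only higher bands and $\delta$ merely rescales the leading band entrywise by units. So conjugation can never move $L$ off band $k$. If instead you attack $L$ with further scaling commutators, note that the matrices supported on bands divisible by $k$ form a subgroup isomorphic to $UT_{\infty}(R\times\cdots\times R)$ ($k$ factors; the $k\times k$ blocks of such a matrix are diagonal), so the same construction merely pushes the junk onto bands divisible by $k^{2}$, then $k^{3}$, and so on: the recursion never closes, and in the infinite case it consumes unboundedly many commutators. (A Folkman--Schur pigeonhole argument shows the defect is not in your particular $k$-periodic patterns: for diagonals whose entries are powers of a single $\omega$, no finite family of them separates every band.) You flag this closure problem yourself, but it is not a matter of ``careful count'' or bookkeeping: the mechanism that would close it is absent from the proposal, and with it the bound $4k-6$.

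The paper closes exactly this gap by conjugacy rather than by band-by-band correction. Its two pillars are: (i) the identity $(BC)^{k}=F_{k}(B,C)$ of Lemma~\ref{lema6}, writing the $k$th power of a product of two order-$k$ matrices as $2k-3$ explicit commutators of their powers, combined in Lemma~\ref{lema5} with concrete $2\times 2$-block matrices $B,C$ (diagonal blocks built from $1,\omega$ and $1,\omega^{-1}$, off-diagonal entries $\tfrac{1}{k}a_{i,i+1}$) so that $A=(BC)^{k}$ is \emph{coherent} and $J(A)$ is any prescribed first superdiagonal; and (ii) the conjugacy lemmas --- Hou's Lemma~\ref{lemma3}, that coherent matrices with equal first superdiagonal are conjugate in $UT_{\infty}(R)$, and Lemma~\ref{matrix}, that matrices with first superdiagonal identically $1$ are conjugate to $I_{\infty}+\sum_{i}E_{i,i+1}$ --- which dispose of \emph{all} higher bands simultaneously, because being a product of $r$ commutators of order-$k$ elements is a conjugation-invariant property. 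Then the factorization $A=BC$, with $B=I_{\infty}+\sum_{i}(a_{i,i+1}-1)E_{i,i+1}$ handled by Corollary~\ref{corol77} and $C=B^{-1}A$ (unit first superdiagonal) handled by Corollary~\ref{corol79}, gives $(2k-3)+(2k-3)=4k-6$. Any repair of your approach needs a substitute for (ii), i.e.\ a device that kills all bands at once; that is precisely what coherence plus conjugacy provides and what your proposal lacks.
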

 
 In the section 3, consider the case $R=\mathbb{K}$ a complex field or the real field and the group $SL_n(\mathbb{K})$ we have the following result:
 
 \begin{theorem}\label{th2}
All element in $SL_n(\mathbb{K})$ can be written as a product of at most $4k-6$ commutators of elements of order $k$ in $GL_n(\mathbb{K}).$
\end{theorem}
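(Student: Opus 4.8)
The plan is to reduce an arbitrary $A\in SL_n(\mathbb{K})$ to triangular form and then peel off the semisimple (diagonal) part from the unipotent part, applying Theorem~\ref{th1} to the latter. The first observation makes conjugation harmless: for any $g\in GL_n(\mathbb{K})$ one has $g[x,y]g^{-1}=[gxg^{-1},gyg^{-1}]$, and conjugation preserves the order of an element, so if $gAg^{-1}$ is a product of $m$ commutators of order-$k$ elements of $GL_n(\mathbb{K})$ then so is $A$. Over $\mathbb{C}$ every $A$ is similar to an upper triangular $T\in SL_n(\mathbb{C})$ (Schur/Jordan form), with $\det T=\det A=1$; over $\mathbb{R}$ I would instead use the real canonical form, in which $A$ is similar to a block upper triangular matrix whose diagonal blocks are either $1\times 1$ (real eigenvalues) or $2\times 2$ rotation--dilation blocks $\left(\begin{smallmatrix} a & -b\\ b & a\end{smallmatrix}\right)$ arising from conjugate pairs of complex eigenvalues. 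In either case I may assume $A=T=DU$, where $D$ is the (block-)diagonal factor with $\det D=1$ and $U\in UT_n(\mathbb{K})$ is unipotent.

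The unipotent factor is then disposed of by Theorem~\ref{th1}: since $U\in UT_n(\mathbb{K})$, it is a product of at most $4k-6$ commutators of order-$k$ elements of $UT_n^{(k)}(\mathbb{K})\subseteq GL_n(\mathbb{K})$. Thus the entire problem collapses to incorporating the diagonal factor $D$ \emph{within} this same budget. The naive estimate, namely writing $D$ as one further commutator and multiplying, would give $4k-5$ factors, one too many, so a genuine merging is unavoidable.

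The crux is therefore to realize $D$ as a commutator of two order-$k$ elements and to fold it into one of the slots already produced by Theorem~\ref{th1}. The enabling point is that, although a finite-order matrix over $\mathbb{C}$ is semisimple with $k$-th-root-of-unity eigenvalues, a commutator $[P,Q]=P\,(QP^{-1}Q^{-1})$ is a product of the order-$k$ element $P$ with the conjugate $QP^{-1}Q^{-1}$ of $P^{-1}$, and such products carry no constraint on their eigenvalues; this is exactly what lets the arbitrary eigenvalues $\lambda_1,\dots,\lambda_n$ of $D$ (subject only to $\prod\lambda_i=1$) be produced from finite-order building blocks. I would exhibit an explicit pair $P,Q$ of order $k$ with $[P,Q]=D$ via a monomial/semisimple gadget adapted to the block structure, and then prove a merging lemma to the effect that $D\cdot[a_1,b_1]=[a_1',b_1']$ for suitable order-$k$ elements $a_1',b_1'\in GL_n(\mathbb{K})$, using the extra room gained by enlarging the ambient group from $UT_n^{(k)}$ to all of $GL_n$. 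Over $\mathbb{R}$ the $2\times 2$ rotation--dilation blocks would be handled block by block through genuine order-$k$ real matrices (rotations by multiples of $2\pi/k$ together with reflections), with the determinants tracked so that the total remains $1$.

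The main obstacle is precisely this merging together with the real case. Forcing the count to be exactly $4k-6$ rather than $4k-5$ means $D$ must be built into a commutator that is already present, which requires understanding the internal structure of the generators supplied by Theorem~\ref{th1} rather than invoking it purely as a black box. And over $\mathbb{R}$, where finite-order matrices are rigid---their eigenvalues are roots of unity occurring in conjugate pairs---realizing the rotation--dilation blocks by honest order-$k$ elements while keeping the determinant equal to $1$ is, I expect, the most delicate part of the argument.
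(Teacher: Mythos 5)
Your reduction $A\sim DU$, with $D$ (block-)diagonal and $U$ unipotent, manufactures exactly the problem you then cannot solve. Applying Theorem~\ref{th1} to $U$ already consumes the entire budget of $4k-6$ commutators, so the diagonal factor $D$ must be absorbed ``for free,'' and the merging lemma that would accomplish this is never stated, let alone proved --- you yourself identify it as the main obstacle. This is a genuine gap, not a technicality: a commutator of order-$k$ elements is a rigid object, and there is no general principle allowing one to multiply a prescribed diagonal matrix into an already-existing commutator slot while preserving both the count and the order-$k$ constraint. The same gap reappears in your treatment of the real case, which is likewise only sketched.

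The paper avoids the problem by choosing a factorization in which the semisimple part never appears. For non-scalar $A\in SL_n(\mathbb{K})$, Sourour's factorization theorem (\cite{Sourour}, Theorem 1) shows $A$ is similar to $LU$ where $L$ is lower triangular, $U$ is upper triangular, and \emph{both} have all diagonal entries equal to $1$; there is no diagonal factor $D$ left to merge. Each unipotent factor is then a product of only $2k-3$ commutators of powers of two order-$k$ elements (via Corollary~\ref{corol79}, after conjugating into the form it requires), giving $4k-6$ in total. Scalar matrices $\alpha I$, to which Sourour's theorem does not apply, are handled by an explicit construction taken from Grunenfelder \cite{Grunenfelder}: one has $\mathrm{diag}(a,a^{-1})=J_1(a)J_2(a)$ with $J_1(a),J_2(a)$ of order $k$, so $\alpha I$ factors as $FG$ where each of $F$, $G$ equals $(J_1J_2)^k$ for suitable block-diagonal order-$k$ matrices $J_1,J_2$; the identity $(BC)^k=F_k(B,C)$ of Lemma~\ref{lema6} then exhibits each of $F,G$ as a product of $2k-3$ commutators of powers of $J_1,J_2$, again $4k-6$ in total. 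Note that this gadget also dissolves your worry about $\mathbb{R}$: the parameter $t=\theta+\theta^{-1}$ is real, so $J_1(a),J_2(a)$ are genuine real matrices of order $k$, and no separate rotation--dilation analysis is needed. If you want to salvage your outline, the lesson is not to merge $D$ but to pick a factorization (Sourour's) in which $D$ never occurs.
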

 
And, if we consider $GL_{VK}(\infty,\mathbb{K})$ the Vershik-Kerov group, we have:

\begin{theorem}\label{th3} Assume that $\mathbb{K}$ is a complex field or the real number field. Then every element of the
group $SL_{VK}(\infty, \mathbb{K})$ can be expressed as a product of at most $4k-6$ commutators of elements of order $k$ in
$GL_{VK}(\infty, \mathbb{K})$.
\end{theorem}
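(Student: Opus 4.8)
The plan is to combine the two decompositions already obtained, using Theorem~\ref{th2} for the finite corner and Theorem~\ref{th1} for the infinite upper triangular tail, and to fuse them into a single family of $4k-6$ commutators. Recall that every $g\in GL_{VK}(\infty,\mathbb{K})$ is upper triangular outside a finite top-left corner, so for a suitable $N$ it has the block form
\[
g=\begin{pmatrix} A & B\\ 0 & U\end{pmatrix},
\]
with $A\in GL_N(\mathbb{K})$ supported on the coordinates $1,\dots,N$, the tail $U$ upper triangular on the coordinates $>N$, and $B$ an $N\times\infty$ coupling block. Enlarging $N$ past every index where the diagonal entry of the tail differs from $1$, I may absorb those finitely many entries into the corner; after this normalization $U\in UT_\infty(\mathbb{K})$ and, since the regularized determinant of $g$ equals $1$, the enlarged corner satisfies $\det A=1$, i.e. $A\in SL_N(\mathbb{K})$.

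Because $\mathbb{K}$ is $\mathbb{C}$ or $\mathbb{R}$, Theorem~\ref{th2} gives $A=\prod_{i=1}^{m}[P_i,Q_i]$ with $m\le 4k-6$ and $P_i,Q_i\in GL_N(\mathbb{K})$ of order $k$, while Theorem~\ref{th1}, applied to $U$ read on the coordinates $>N$, gives $U=\prod_{i=1}^{m'}[X_i,Y_i]$ with $m'\le 4k-6$ and $X_i,Y_i\in UT^{(k)}_\infty(\mathbb{K})$ of order $k$. Padding the shorter product with trivial commutators $[Z,Z]=1$ for a fixed element $Z$ of order $k$, I may assume $m=m'=4k-6$ without changing the orders of the factors. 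If the coupling block vanishes the conclusion is immediate: the block-diagonal elements $\mathrm{diag}(P_i,X_i)$ and $\mathrm{diag}(Q_i,Y_i)$ lie in $GL_{VK}(\infty,\mathbb{K})$, have order $k$, and satisfy $\prod_i[\mathrm{diag}(P_i,X_i),\mathrm{diag}(Q_i,Y_i)]=\mathrm{diag}(A,U)=g$. More generally, $B$ can first be removed by conjugating $\mathrm{diag}(A,U)$ by a unipotent block $\left(\begin{smallmatrix} I & C\\ 0 & I\end{smallmatrix}\right)$ whenever the Sylvester equation $CU-AC=B$ is solvable, that is, whenever $1\notin\mathrm{spec}(A)$; conjugation preserves both the order of the factors and their number.

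To treat an arbitrary coupling block I would replace the block-diagonal elements by block upper triangular ones,
\[
\widehat P_i=\begin{pmatrix} P_i & R_i\\ 0 & X_i\end{pmatrix},\qquad \widehat Q_i=\begin{pmatrix} Q_i & S_i\\ 0 & Y_i\end{pmatrix},
\]
so that each commutator keeps top-left corner $[P_i,Q_i]$ and bottom-right block $[X_i,Y_i]$ while acquiring a coupling block that depends linearly on the auxiliary data $R_i,S_i$. Two requirements must be met simultaneously. First, $\widehat P_i$ and $\widehat Q_i$ must have order exactly $k$; since $P_i^{\,k}=I$ and $X_i^{\,k}=I$, the condition $\widehat P_i^{\,k}=I$ reduces to $\Phi_i(R_i):=\sum_{j=0}^{k-1}P_i^{\,j}R_iX_i^{\,k-1-j}=0$, and likewise for $S_i$. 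Diagonalizing $P_i$ and $X_i$ over $\mathbb{C}$, the operator $\Phi_i$ scales the component of $R_i$ attached to a pair of eigenvalues $(\lambda,\mu)$ by $\sum_{j=0}^{k-1}\lambda^{j}\mu^{k-1-j}$, which equals $(\lambda^k-\mu^k)/(\lambda-\mu)=0$ when $\lambda\neq\mu$ and $k\lambda^{k-1}\neq 0$ when $\lambda=\mu$; hence $\ker\Phi_i$ is large. Second, expanding the telescoping product, the total coupling block of $\prod_i[\widehat P_i,\widehat Q_i]$ is a linear expression in the $R_i,S_i$ that must equal $B$. The main obstacle is precisely to solve this linear system while staying inside the kernels $\ker\Phi_i$ dictated by the order constraint; I expect this to be the delicate step, requiring a careful choice of the off-diagonal blocks (and possibly of the factorizations produced by Theorems~\ref{th1} and~\ref{th2}) so that the attainable couplings span enough of the $N\times\infty$ blocks to hit the prescribed $B$.

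Granting this, each $\widehat P_i,\widehat Q_i$ lies in $GL_{VK}(\infty,\mathbb{K})$ and has order $k$, and by construction
\[
\prod_{i=1}^{4k-6}[\widehat P_i,\widehat Q_i]=\begin{pmatrix} A & B\\ 0 & U\end{pmatrix}=g,
\]
which exhibits $g$ as a product of at most $4k-6$ commutators of elements of order $k$ in $GL_{VK}(\infty,\mathbb{K})$, as desired.
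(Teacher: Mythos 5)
Your argument is rigorous only in the special case $1\notin\mathrm{spec}(A)$, and the device you propose for the general case is both unproven and not the mechanism that actually works. The paper never fuses coupled commutators. Following the proof of Theorem 1.3 in Hou, it first conjugates $M=\left(\begin{smallmatrix} M_1 & M_2\\ 0 & M_3\end{smallmatrix}\right)$ so that the generalized eigenspace of the corner for the eigenvalue $1$ is split off and absorbed into the infinite unitriangular tail: conjugate the corner to $\left(\begin{smallmatrix} A & *\\ 0 & V\end{smallmatrix}\right)$ with $1\notin\mathrm{spec}(A)$ and $V$ unitriangular, then regroup blocks so that the new finite corner is $A$ and the new tail $\left(\begin{smallmatrix} V & *\\ 0 & M_3\end{smallmatrix}\right)$ lies in $UT_{\infty}(\mathbb{K})$. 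Your normalization goes in the opposite direction (you enlarge the corner by swallowing non-unit diagonal entries of the tail), which does nothing to prevent $1\in\mathrm{spec}(A)$; for instance the corner could itself be unipotent. Once the corner has no eigenvalue $1$, the lemma quoted in the paper (solvability of the Sylvester equation $CU-AC=B$, exactly your own observation) removes the coupling block by a single conjugation, and since conjugation preserves the number of commutators and the orders of their entries (Remark 1), the block-diagonal argument you already gave — Theorem \ref{th2} on the corner, Theorem \ref{th1} on the tail, direct sums of the factors — finishes the proof with $4k-6$ commutators.

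Your fallback scheme for $1\in\mathrm{spec}(A)$, namely choosing off-diagonal blocks $R_i,S_i\in\ker\Phi_i$ so that the accumulated coupling block of $\prod_i[\widehat P_i,\widehat Q_i]$ equals $B$, is precisely the step you concede you cannot carry out (``Granting this\dots''), so as written the proof has a hole at the only genuinely delicate point of the theorem. Moreover, even the kernel analysis you sketch is unavailable: it diagonalizes both $P_i$ and $X_i$, but $X_i,Y_i$ are infinite triangular matrices in $UT^{(k)}_{\infty}(\mathbb{K})$, for which ``diagonalizing'' and decomposing the $N\times\infty$ coupling block along eigenvalue pairs makes no sense without substantial additional justification. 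The repair is not to complete that linear-algebra scheme but to discard it in favor of the eigenvalue-splitting conjugation described above, which reduces every case to the one you did handle correctly.
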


\section{Preliminaries}

Fix $k\in \mathbb{N}$, $k\geq 2$ and let $R$ be an associative ring with unity and denote by $\omega$ an arbitrary kth root of unity in $R$. Denote by $T_n(R)$ and $T_{\infty}(R)$ the groups of $n\times n$ and infinite upper triangular matrices over a ring $R$, respectively.   Anagolously, denote by $UT_n(R)$ and $UT_{\infty}(R)$ the groups of upper triangular matrices whose entries on the main diagonal are equal to unity $1$. We also put

$$UT^{(k)}_n(R)=\{g\in T_n(R); \ g_{ii}^k=1\}.$$
$$UT^{(k)}_{\infty}(R)=\{g\in T_{\infty}(R); \ g_{ii}^k=1\}.$$
$$D_n^{(k)}(R)=\{g\in UT^{(k)}_n(R); \ g_{ij}=0, \ if \ i\neq j\}.$$
$$D^{(k)}_{\infty}(R)=\{g\in UT^{(k)}_{\infty}(R); \ g_{ij}=0, \ if \ i\neq j\}.$$

Denote by $E_{ij}$ the finite or infinite matrix with a unique nonzero entry equal to $1$ in the position $(i,j)$, so  $A=\sum_{1\leq i\leq j \leq n}a_{i,j}E_{ij}$ ($A=\sum_{i,j\in \mathbb{N}}a_{ij}E_{i,j}$) is the $n\times n$ (infinite $\mathbb{N}\times \mathbb{N}$) matrix with $a_{ij}$ in the position $(i,j)$. Denote by $I_n$ and $I_{\infty}$ the identity matrices in $T_n(R)$ and $T_{\infty}(R)$, respectively.
The following remark is immediate:

\begin{remark} Let $G$ a group
\begin{itemize}
    \item [1,] If $g\in G$ is an element of order $k$, i.e. $g^k=1$, then for every $h\in G$ the conjugation $g^h=hgh^{-1}$ is an element of order $k$.
    \item[2.] If $g\in G$ is a product of $r$ element of order $k$, then for every $h\in G$ the conjugation $g^h$ is a product of $r $ elements of order $k$.
\end{itemize}
\end{remark}

Define the commutator $[\alpha,\beta]=\alpha\beta\alpha^{-1}\beta^{-1}.$ Denote by $E_{ij}$ the finite or infinite matrix  with an unique nonzero entry equal to 1 in the position $(i,j)$. Then, if $A\in UT^{(k)}_n(R)$ or $UT^{(k)}_{\infty}(R)$ we can writte
$$A=\sum_{i,j}a_{i,j}E_{i,j}.$$

Denote by $J_{\infty}(R)$ the set of all matrices in $T_{\infty}(R)$ in which all 
entries outside of the first superdiagonal equal $0$. Let $A\in UT^{(k)}_{\infty}(R)$ and denote by $J(A)$ the matrix of $J_{\infty}(R)$ that has the same entries on the first super diagonal as $A$. Denote by $Z$ the center of the ring $R$, and by $D_{\infty}(Z)$ the subring of all diagonal infinite matrices with entries in $Z$.
\newline
We say that $A$ is $coherent$ when there is a sequence $(D_k)_{k\geq 1}$ of elements of $D_{\infty}(Z)$ such that
$$A=\displaystyle \sum^{\infty}_{k=0}D_kJ(A)^k.$$

In such a sequence we can always require $D_0=D_1=I_{\infty}$, in which case we say that the sequence $(D_k)_{k\geq 1}$ is $normalized$.
\\
\section{Expressing Matrices into Products of Commutators}
Assume that $2$ is  an invertible element of $R$. The following results are of Hou in \cite{Hou}:
\begin{lemma}\label{lemma1} If $A\in UT_{\infty}(R)$ is coherent, then $A^2$ is coherent.
\end{lemma}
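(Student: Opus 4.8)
The plan is to expand $A^2$ explicitly in powers of the superdiagonal matrix and then rescale. Write $J := J(A)$ and start from a normalized coherent expansion $A = \sum_{k=0}^{\infty} D_k J^k$ with $D_k \in D_{\infty}(Z)$. First I would note that no convergence issue arises: since $J^k$ is supported on the $k$th superdiagonal, each fixed entry of such a series receives a contribution from exactly one value of $k$, and the same finiteness governs the Cauchy product defining $A^2$. Hence every manipulation below is a legitimate entrywise identity.

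Next I would compute $J(A^2)$. Because $A$ has $1$'s on the diagonal, the $(i,i+1)$ entry of $A^2$ equals $A_{i,i}A_{i,i+1}+A_{i,i+1}A_{i+1,i+1}=2A_{i,i+1}$, so $J(A^2)=2J$. The hypothesis that $2$ is invertible (and central, being $1+1$) is precisely what will let me undo this scaling at the end.

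The key algebraic step is a commutation rule between $J$ and a central diagonal matrix $D=\operatorname{diag}(d_1,d_2,\ldots)$. Comparing superdiagonal entries and using that each $d_i$ is central gives $JD=\sigma(D)J$, where $\sigma(D)=\operatorname{diag}(d_2,d_3,\ldots)$ is the down-shift; iterating yields $J^iD=\sigma^i(D)J^i$. Substituting this into the Cauchy product,
$$A^2=\sum_{i,j\geq 0}D_iJ^iD_jJ^j=\sum_{i,j\geq 0}D_i\,\sigma^i(D_j)\,J^{i+j}=\sum_{n=0}^{\infty}E_nJ^n,\qquad E_n:=\sum_{i=0}^{n}D_i\,\sigma^i(D_{n-i}).$$
Since $\sigma$ maps $D_{\infty}(Z)$ into itself and sums and products of central diagonal matrices remain central and diagonal, each $E_n$ lies in $D_{\infty}(Z)$.

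Finally I would reconcile this with $J(A^2)=2J$: setting $D'_n:=2^{-n}E_n\in D_{\infty}(Z)$ gives $\sum_n D'_n J(A^2)^n=\sum_n 2^{-n}E_n(2J)^n=\sum_n E_nJ^n=A^2$, which is exactly a coherent expansion of $A^2$. The only place that demands care is the commutation/shift bookkeeping, and the observation that centrality of the diagonal entries of the $D_k$ is what makes the entries of $J$ pass through them; everything else is formal. In particular, invertibility of $2$ enters only through the final rescaling, mirroring its appearance in $J(A^2)=2J$.
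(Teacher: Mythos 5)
Your proof is correct and takes essentially the same route as the paper, which carries out exactly this computation (Cauchy product, the shift rule $J^iD=S^i(D)J^i$ for central diagonal $D$, and rescaling via invertibility of the scalar) in the proof of its generalization, Lemma \ref{lema4}. If anything, your version is more careful: the paper's inner sum mistakenly starts at $j=1$ rather than $j=0$, and it leaves implicit the final step of converting an expansion in powers of $J(A)$ into one in powers of $J(A^2)=2J(A)$, which you spell out via $D'_n=2^{-n}E_n$.
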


\begin{lemma}\label{lemma2} Assume that $R$ is an associative ring with unity $1$ and that $2$ is an invertible
element of $R$. Let $J \in J_{\infty}(R)$. Then, there exists a coherent matrix $A\in UT_{\infty}(R)$ such that
$J(A) = J$ and $A$ is the commutator of two involutions in $T_{\infty}(R)$.
\end{lemma}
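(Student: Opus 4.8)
The plan is to exploit the identity that for involutions $P,Q$ one has $[P,Q]=PQP^{-1}Q^{-1}=PQPQ=(PQ)^{2}$, so that producing $A$ as a commutator of two involutions is the same as writing $A=M^{2}$ with $M=PQ$ a product of two involutions. A product of two involutions is precisely an element that is conjugate to its own inverse by an involution: if $PMP^{-1}=M^{-1}$ with $P^{2}=I_\infty$, then $Q:=PM$ satisfies $Q^{2}=PMPM=(PMP)M=M^{-1}M=I_\infty$ and $PQ=M$. Accordingly I fix the diagonal sign involution $S=\mathrm{diag}((-1)^{i+1})_{i\ge 1}\in T_\infty(R)$ and aim to construct a coherent $M\in UT_\infty(R)$ with $J(M)=\tfrac12 J$ satisfying $SMS=M^{-1}$; then $A:=M^{2}$ will be the desired matrix, realized as $A=[S,SM]$.

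Granting such an $M$, I would verify that all requirements are met. Since $M$ is coherent, Lemma~\ref{lemma1} gives that $A=M^{2}$ is coherent. Computing the first superdiagonal of $M^2$ gives $(M^{2})_{i,i+1}=M_{ii}M_{i,i+1}+M_{i,i+1}M_{i+1,i+1}=2\cdot\tfrac12 J_{i,i+1}=J_{i,i+1}$, so $J(A)=J$. Finally $SM$ is an involution because $(SM)^{2}=SMSM=(SMS)M=M^{-1}M=I_\infty$, and $[S,SM]=S(SM)S(SM)=M^{2}=A$, exhibiting $A$ as a commutator of the two involutions $S$ and $SM$ of $T_\infty(R)$.

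The heart of the argument, and the step I expect to be the main obstacle, is the construction of the coherent $M$ with $J(M)=\tfrac12J$ and $SMS=M^{-1}$; this is where invertibility of $2$ enters. Writing $N=\tfrac12 J$ and seeking $M=\sum_{k\ge 0}D_kN^{k}$ with central diagonal coefficients $D_k\in D_\infty(Z)$ normalized by $D_0=D_1=I_\infty$, I note that $\sigma(M):=SMS=\sum_{k\ge 0}(-1)^{k}D_kN^{k}$, since conjugation by $S$ multiplies each entry on the $k$th superdiagonal by $(-1)^{k}$. Using the commutation rule $N^{k}D=D^{(+k)}N^{k}$, where $D^{(+k)}$ denotes the $k$-shift of a diagonal matrix, the equation $M\,\sigma(M)=I_\infty$ becomes the system $F_m:=\sum_{k=0}^{m}(-1)^{m-k}D_kD_{m-k}^{(+k)}=0$ for all $m\ge 1$, with $F_0=I_\infty$ automatic. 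For even $m$ the two extreme terms ($k=0$ and $k=m$) contribute $2D_m$, so invertibility of $2$ lets me solve recursively for $D_m$ in terms of the lower coefficients; I then set the odd coefficients $D_{2j+1}=0$ for $j\ge 1$.

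It remains to check that the odd equations $F_{2j+1}=0$ hold automatically, and here a symmetry argument closes the gap. Setting $G:=M\sigma(M)$, the relation $\sigma^{2}=\mathrm{id}$ yields the identity $\sigma(G)=\sigma(M)M=M^{-1}GM$, valid for any $M$. I argue by induction on the degree $m$: assuming $F_i=0$ for $1\le i<m$, the even case is dispatched by the choice of $D_m$, while for odd $m$ one has $G=I_\infty+F_mN^{m}+O(N^{m+1})$, so comparing the coefficient of $N^{m}$ on the two sides of $\sigma(G)=M^{-1}GM$ gives $-F_m=F_m$, whence $2F_m=0$ and $F_m=0$ again by invertibility of $2$. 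This forces $G=I_\infty$, that is $SMS=M^{-1}$, completing the construction of $M$ and hence of $A$.
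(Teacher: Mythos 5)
Your proof is correct, but it takes a genuinely different route from the paper's. The paper does not reprove this lemma at all --- it quotes it from \cite{Hou} --- and its own contribution is the order-$k$ analogue (Lemma \ref{lema5}), proved by an \emph{explicit} construction: two block-diagonal matrices $B,C$ of order $k$, built from $2\times 2$ blocks carrying the entries $\frac{1}{k}a_{i,i+1}$ and a root of unity, for which $BC$ is visibly coherent and $(BC)^k$ has first superdiagonal $J$; at $k=2$ this is exactly Hou's pair of involutions with $[B,C]=(BC)^2$. You share the starting identity $[P,Q]=(PQ)^2$ for involutions, but instead of exhibiting explicit involutions you characterize products of two involutions as elements $M$ satisfying $SMS=M^{-1}$ for an involution $S$, take $S$ to be the alternating sign matrix, and then solve for a coherent $M$ with $J(M)=\tfrac12 J$ degree by degree: the even-degree coefficients $D_m$ are determined recursively using $2^{-1}$ (and remain central, since $2^{-1}\in Z$), while the odd-degree equations are shown to hold automatically via the symmetry $\sigma(G)=M^{-1}GM$. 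Your approach is more conceptual and produces coherence of $M$ by construction rather than by inspection, at the price of a longer formal argument; conversely, the paper's explicit construction is shorter and is the one that generalizes to order $k$ (your trick is tied to $k=2$, since it rests on conjugation by $S$ scaling the $m$th superdiagonal by $(-1)^m$). One small point of rigor: in your induction for odd $m$, comparing $m$th superdiagonal components yields $-F_mN^m=F_mN^m$, hence $F_mN^m=0$; you cannot divide by $N^m$ to conclude $F_m=0$ (entries of $J$ may be zero or zero divisors), so the induction hypothesis should read ``the components of $G-I_\infty$ on superdiagonals $1,\dots,m-1$ vanish'' rather than ``$F_i=0$''. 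Since $F_iN^i=0$ is all the argument ever uses, this rephrasing closes the gap and your proof stands.
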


and 

\begin{lemma}\label{lemma3}
Assume that $R$ is an associative ring with unity $1$ and that $2$ is an invertible
element of $R$. Let $A, B$ be coherent matrices of $UT_{\infty}(R)$ such that $J(A) = J(B)$. Then, $A$
and $B$ are conjugated in the group $UT_{\infty}(R)$.
\end{lemma}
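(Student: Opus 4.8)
The plan is to produce the conjugating element as a coherent power series in $J:=J(A)=J(B)$, determined coefficient by coefficient. Using normalized coherent expansions, write $A=\sum_{k\ge 0}D_kJ^k$ and $B=\sum_{k\ge 0}E_kJ^k$ with $D_0=E_0=D_1=E_1=I_\infty$ and all $D_k,E_k\in D_\infty(Z)$. The key algebraic device is the shift induced by $J$: for a diagonal matrix $D$ with entries $d_i$ let $\sigma(D)$ be the diagonal matrix with $(\sigma(D))_{ii}=d_{i+1}$. Because the coefficients $D_k,E_k$ have \emph{central} entries, one has $JD=\sigma(D)J$ and hence $J^kD=\sigma^k(D)J^k$; this is exactly what lets every product of ``diagonal-times-power-of-$J$'' terms be rewritten in the normal form $(\text{diagonal central})\cdot J^{m}$, and it is where centrality of the coefficients is used essentially.

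Next I look for $g=\sum_{k\ge 0}G_kJ^k$ with $G_0=I_\infty$ and $G_k\in D_\infty(Z)$ solving $gA=Bg$. Expanding both sides with the commutation rule and collecting the coefficient of $J^n$ gives $\sum_{k=0}^n G_k\sigma^k(D_{n-k})=\sum_{k=0}^n E_{n-k}\sigma^{n-k}(G_k)$. Using $D_0=D_1=E_0=E_1=I_\infty$, the $k=n$ terms on each side are both $G_n$, so the highest-index unknown cancels, and the order-$n$ equation reduces to a relation of the form $G_{n-1}-\sigma(G_{n-1})=T^{(n)}$, where $T^{(n)}$ is a diagonal central matrix built only from $D_\bullet,E_\bullet$ and the previously determined $G_1,\dots,G_{n-2}$.

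The crucial point is that such a shift equation is always solvable: writing $(G_{n-1})_{ii}=c_i$ and $T^{(n)}_{ii}=t_i$, the requirement $c_i-c_{i+1}=t_i$ telescopes to $c_i=-\sum_{\ell<i}t_\ell$ (after normalizing $c_1=0$), and the resulting $c_i$ lie in $Z$ because the $t_\ell$ do. Thus $G_1,G_2,\dots$ are determined recursively, and notably no invertibility hypothesis enters this step. I would then check that $g$ is a genuine element of $UT_\infty(R)$ and that the formal identity $gA=Bg$ is a true matrix identity: since $J^k$ is supported on the $k$-th superdiagonal, each entry of $g=\sum_kG_kJ^k$ and of the products $gA,\,Bg$ is a finite sum, so the series converge entrywise and the Cauchy products are literal. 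As $g$ is unitriangular it is invertible in $UT_\infty(R)$, giving $gAg^{-1}=B$.

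The main obstacle I expect is organizational rather than a single hard idea: one must (i) verify that the $\sigma$-twist makes every intermediate expression collapse back to the coherent normal form $\sum(\text{diagonal central})J^m$, and (ii) justify the passage from the order-by-order (formal) solution to an honest conjugation in $UT_\infty(R)$, which rests on the local nilpotence of $J$. An equivalent and perhaps cleaner packaging is the successive-approximation form: set $A^{(1)}=A$ and inductively conjugate by $h_m=I_\infty+C_mJ^m$ so that $A^{(m+1)}=h_mA^{(m)}h_m^{-1}$ agrees with $B$ through order $J^{m+1}$; conjugation by $h_m$ leaves the coefficients of order $\le m$ untouched and alters the order-$(m+1)$ coefficient by exactly $C_m-\sigma(C_m)$, so the same telescoping solves for $C_m$, and $g=\lim_m h_m\cdots h_1$ exists entrywise again by local nilpotence of $J$.
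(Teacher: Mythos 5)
Your proof is correct, and there is in fact nothing in the paper to compare it against: Lemma \ref{lemma3} is one of the three results the paper imports verbatim from Hou \cite{Hou} (``The following results are of Hou...''), with no proof supplied. Your argument is a complete and valid reconstruction of the standard one. The key computations all check out: with $g=\sum_{k\ge 0}G_kJ^k$, $G_0=I_\infty$, the commutation rule $J^kD=\sigma^k(D)J^k$ (valid precisely because the coefficients lie in $D_\infty(Z)$) gives the order-$n$ equation $\sum_{k=0}^n G_k\sigma^k(D_{n-k})=\sum_{k=0}^n E_{n-k}\sigma^{n-k}(G_k)$; normalization $D_0=E_0=D_1=E_1=I_\infty$ makes the $k=n$ terms cancel and isolates $G_{n-1}-\sigma(G_{n-1})=T^{(n)}$ with $T^{(n)}\in D_\infty(Z)$ depending only on earlier data, and the shift equation $c_i-c_{i+1}=t_i$ is always solvable by telescoping within $Z$. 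The analytic worries are correctly dispatched: since $G_kJ^k$ is supported on the $k$-th superdiagonal, every entry of $g$, $gA$, $Bg$ is a finite (indeed essentially single-term per superdiagonal) sum, and $g$ is unitriangular, hence invertible in $UT_\infty(R)$. Your alternative successive-approximation packaging also works, including the delicate case $m=1$ where the quadratic term $(C_1J)^2$ enters at order $2=m+1$ but cancels. Finally, your observation that invertibility of $2$ is nowhere needed in this lemma is accurate; that hypothesis is carried along from the surrounding context (Lemmas \ref{lemma1} and \ref{lemma2}) rather than being used here.
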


The following result generalized the Lemma \ref{lemma1}

\begin{lemma}\label{lema4}
If $A\in UT_{\infty}(R)$ is coherent, then $A^k$ is coherent, for all $k\in \mathbb{N}$ such that $k$ is invertible in $R$.
\end{lemma}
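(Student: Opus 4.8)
The plan is to reduce the statement to a closure property of a single ring of ``power series in $J(A)$'' and then handle the passage from $J(A)$ to $J(A^k)$ using invertibility of $k$. Throughout write $J=J(A)$ and, to avoid a clash with the exponent $k$, rename the summation index in the definition of coherence, so that $A=\sum_{m\ge 0}D_mJ^m$ with each $D_m\in D_\infty(Z)$. Writing $A=I_\infty+N$ with $N$ strictly upper triangular, the binomial expansion $A^k=(I_\infty+N)^k=\sum_{j}\binom{k}{j}N^j$ shows that only the term $j=1$ contributes to the first superdiagonal, since $I_\infty$ contributes only to the diagonal and every $N^j$ with $j\ge 2$ lives on the second superdiagonal or beyond. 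Hence $A^k\in UT_\infty(R)$ and $J(A^k)=kJ(A)=kJ$. Since $k=1+\cdots+1$ lies in $Z$ and is invertible, $k^{-1}\in Z$; so if I can show $A^k=\sum_{m\ge 0}C_mJ^m$ with $C_m\in D_\infty(Z)$, then substituting $J^m=k^{-m}J(A^k)^m$ gives $A^k=\sum_{m\ge0}(k^{-m}C_m)J(A^k)^m$ with $k^{-m}C_m\in D_\infty(Z)$, which is exactly the coherence of $A^k$.

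Thus everything reduces to showing that the set
$$\mathcal{C}_J=\Big\{\textstyle\sum_{m\ge0}F_mJ^m:\ F_m\in D_\infty(Z)\Big\}$$
is closed under multiplication; since $A\in\mathcal C_J$ this yields $A^k\in\mathcal C_J$ at once, with no induction on $k$. First I would record that every such sum is a well-defined element of $T_\infty(R)$: as $J^m$ is supported on the $m$-th superdiagonal, each matrix entry receives exactly one contribution, and in a product each entry $(i,i+n)$ receives only the finitely many contributions with $m+l=n$, so all the rearrangements below are legitimate entrywise. The engine of the closure is the commutation identity
$$J^mG=G^{[m]}J^m\qquad(G\in D_\infty(Z)),$$
where $G^{[m]}$ is the diagonal matrix defined by $(G^{[m]})_{ii}=G_{i+m,i+m}$; one checks it by comparing the $(i,i+m)$ entries, and this is the single place where centrality of the entries of $G$ is essential, since it lets the scalar $G_{i+m,i+m}$ pass across the entry $(J^m)_{i,i+m}$. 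Note that $G^{[m]}\in D_\infty(Z)$, as it merely shifts central diagonal entries.

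Granting this, for $\sum_mF_mJ^m$ and $\sum_lG_lJ^l$ in $\mathcal C_J$ I would compute
$$\Big(\sum_mF_mJ^m\Big)\Big(\sum_lG_lJ^l\Big)=\sum_{m,l}F_mG_l^{[m]}J^{m+l}=\sum_{n\ge0}\Big(\sum_{m+l=n}F_mG_l^{[m]}\Big)J^n,$$
and each coefficient $\sum_{m+l=n}F_mG_l^{[m]}$ is a finite sum of products of elements of $D_\infty(Z)$, hence again in $D_\infty(Z)$. This proves that $\mathcal C_J$ is a subring of $T_\infty(R)$ containing $A$, so $A^k\in\mathcal C_J$, completing the reduction above. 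The \emph{main obstacle} is precisely the verification of the shift identity $J^mG=G^{[m]}J^m$ together with the bookkeeping that keeps every coefficient inside $D_\infty(Z)$; once that is in place the argument is purely formal, and the case $k=2$ recovers Lemma \ref{lemma1}.
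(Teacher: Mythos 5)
Your proof is correct and follows essentially the same route as the paper's: both rest on the shift-commutation identity $J^mG=S^m(G)J^m$ (your $G^{[m]}$ is exactly the paper's shift $S^m(G)$) to show that series $\sum_{m\ge 0}F_mJ(A)^m$ with coefficients in $D_\infty(Z)$ multiply to series of the same form, and both then use centrality and invertibility of $k$ together with $J(A^k)=kJ(A)$ to rewrite the result in powers of $J(A^k)$. Your packaging---subring closure in place of the paper's induction on the exponent, and the binomial expansion of $(I_\infty+N)^k$ to obtain $J(A^k)=kJ(A)$ instead of reading it off the coefficient recursion---is tidier (and fixes the paper's garbled statement ``$J(A)^l=lJ(A)$'', which should read $J(A^l)=lJ(A)$), but it is not a genuinely different argument.
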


\begin{proof} The case for $k=2$ has been demonstrated by Hou in \cite{Hou}, here show that if $A=\sum^{\infty}_{i=0}D_iJ(A)^i$ with $D_0=D_1=I_{\infty}$ and $D_2,D_3,\cdots \in D_{\infty}(Z)$, then
$$A^2= \sum^{\infty}_{i=0}\left(\sum^{i}_{j=1}D_jS^j(D_{i-j})\right)J(A)^i,$$
where, if $D\in D_{\infty}(Z)$ has diagonal entries $d_1,d_2,\cdots, d_k, \cdots$, then define $S(D)\in D_{\infty}(Z)$ as the matrix with diagonal entries $d_2,d_3,\cdots,d_{k+1},\cdots.$  \newline
Denote by $D^{(2)}_i= \sum^{i}_{j=1}D_jS^j(D_{i-j})$. We can write 
$$A^2= \sum^{\infty}_{i=0}D^{(2)}_{i}J(A)^i,$$
this follows that
$$A^3=\sum^{\infty}_{i=0}\left(\sum^{i}_{j=0}D_jS^j(D^{(2)}_{i-j})\right)J(A)^i,$$
and, in geral if 
$$A^l=\sum^{\infty}_{i=0}D^{(l)}_iJ(A)^i,$$
where $D^{(l)}_0=D^{(l)}_1=I_{\infty}$ and $D^{(l)}_2,D^{(l)}_3,\cdots \in D_{\infty}(R)$, then
$$A^{l+1}=\sum^{\infty}_{i=0}\left(\sum^{i}_{j=0}D_jS^j(D^{(l)}_{i-j})\right)J(A)^i.$$
Note that $J(A)^3=(D_0 D^{(2)}_1+D_1S(D^{(2)}_0))J(A)=(2+1)J(A)=3J(A)$, in geral $J(A)^l=lJ(A)$ for $l\geq 2$ and using the invertibility of $k$ we conclude that $A^k$ is coherent.
\end{proof}

The following is an adaption of the Lemma \ref{lemma2} to our case.

\begin{lemma}\label{lema5} Assume that $R$ is an associative ring with unity $1$ and that $1+1+\cdots+1=k$ is an invertible element of $R$. Let $J\in J_{\infty}(R)$. Then, there exists a coherent matrix $A\in UT_{\infty}(R)$ such that $J(A)=J$ and $A$ is the product of $2k-3$ commutators all depending on  two matrices $B,C$ in $UT^{(k)}_{\infty}(R)$ of order $k$.
\end{lemma}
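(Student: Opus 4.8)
The plan is to exhibit a single coherent matrix $A$ with $J(A)=J$ together with an explicit factorization into $2k-3$ commutators of order-$k$ elements; since the statement asks only for existence, I am free to let the construction dictate the higher superdiagonals of $A$, subject only to coherence. The first thing I would record is the basic commutator computation. If $B=\sum_i b_i E_{ii}$ is diagonal with the $b_i$ central $k$-th roots of unity and $C\in UT^{(k)}_{\infty}(R)$, then $[B,C]=BCB^{-1}C^{-1}$ is unipotent (its diagonal is identically $1$, since $(BCB^{-1})_{ii}=c_{ii}$ and $(C^{-1})_{ii}=c_{ii}^{-1}$), and a direct computation gives its first superdiagonal entry in position $(i,i+1)$ as
\[ \big(b_i b_{i+1}^{-1}-1\big)\,c_{i,i+1}\,c_{i+1,i+1}^{-1}. \]
Two elementary facts then reduce the whole problem to bookkeeping on this first superdiagonal: the first superdiagonal of a product of unipotent matrices is the sum of the individual first superdiagonals, and conjugation inside $UT_{\infty}(R)$ leaves the first superdiagonal unchanged. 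By Remark 1 every conjugate of $B$ or $C$ again has order $k$, so by Remark 2 any product of commutators of such conjugates is a product of commutators of order-$k$ elements.

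The arithmetic heart is the invertibility of $k$. Choosing $B$ with constant consecutive ratio $b_i b_{i+1}^{-1}=\omega$, the scalar $\omega-1$ attached to a single commutator need not be invertible; but letting $t$ run through $1,\dots,k-1$ and summing the first-superdiagonal contributions of the commutators built from $B^{t}$ produces the scalar $\sum_{t=1}^{k-1}(\omega^{t}-1)=-k$, which is invertible. Hence I would form $k-1$ commutators whose combined first superdiagonal can be solved entrywise to equal any prescribed $J$: one simply takes the first superdiagonal of the auxiliary factor to be $-k^{-1}$ times the target, in the same spirit as the forced normalization $D_0=D_1=I_{\infty}$ in the proof of Lemma~\ref{lema4}. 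This settles the requirement $J(A)=J$.

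It then remains to make the product coherent and to account for the exact count. Coherence I would obtain by choosing the higher entries of the auxiliary factors so that each commutator already lies in the rigid family $\sum_i D_i J^i$ with $D_i\in D_{\infty}(Z)$—invoking Lemma~\ref{lema4} to pass to powers and Lemma~\ref{lemma3} to align matrices sharing the same value of $J(\cdot)$—so that the whole product is again of that form. The remaining $k-2$ commutators, beyond the $k-1$ used to realize the first superdiagonal, are precisely the correction terms (each with trivial first superdiagonal) that repair coherence on the higher superdiagonals without disturbing $J$, for a total of $2k-3$; note that for $k=2$ this collapses to the single commutator of Lemma~\ref{lemma2}.

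I expect the main obstacle to be exactly this simultaneous bookkeeping. The telescoping that pins down the first superdiagonal tends both to spoil coherence on the higher superdiagonals and to introduce factors whose order is a proper divisor of $k$ (as with powers $B^{t}$ when $\gcd(t,k)>1$), so the delicate point is to organize the two generators $B,C$ and the correction commutators so that all three requirements—$J(A)=J$, coherence, and order exactly $k$ for every matrix entering a commutator—hold at once. The identity $\sum_{t=1}^{k-1}(\omega^{t}-1)=-k$, i.e. the invertibility of $k$, is the single enabling fact that makes the first of these possible, and I would expect most of the work to go into meeting the latter two without increasing the count beyond $2k-3$.
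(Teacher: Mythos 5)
Your handling of the first superdiagonal is sound as far as it goes: the computation $[B,C]_{i,i+1}=(b_ib_{i+1}^{-1}-1)c_{i,i+1}c_{i+1,i+1}^{-1}$ is correct, first superdiagonals of unipotent matrices do add under multiplication, and the identity $\sum_{t=1}^{k-1}(\omega^{t}-1)=-k$ (which, note, presupposes $1+\omega+\cdots+\omega^{k-1}=0$, the same hidden assumption under which the paper's generators have order $k$) does let you prescribe $J$ with $k-1$ commutators. But this is the easy half of the lemma, and your proposal stops exactly where the real work begins. The two conditions you defer --- coherence of the product, and order $k$ for every matrix entering a commutator --- are the actual content of the statement, and your proposed mechanism for them (``$k-2$ correction commutators with trivial first superdiagonal'') is pure assertion. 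Coherence means $A=\sum_i D_iJ(A)^i$ with every $D_i$ diagonal over the center: it constrains all infinitely many superdiagonals at once, the coherent matrices are not closed under multiplication, and nothing in your setup explains why such corrections exist as commutators of order-$k$ elements, why finitely many suffice, or why exactly $k-2$. Likewise your auxiliary factor $C$ must carry the superdiagonal $-\frac{1}{k}J$ and simultaneously satisfy $C^{k}=I_{\infty}$, which you never arrange; a generic unipotent $C$ has infinite order. The count $2k-3=(k-1)+(k-2)$ is numerology, not a derivation.

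The paper avoids all of this with one idea your outline lacks: realize $A$ as a perfect $k$-th power. Lemma \ref{lema6} is a purely group-theoretic telescoping identity, $(BC)^{i}=F_i(B,C)C^{i-1}B^{i}C$, which for $i=k$ and $B^{k}=C^{k}=I_{\infty}$ collapses to $(BC)^{k}=F_k(B,C)$, a product of exactly $2k-3$ commutators of powers of $B$ and $C$ --- that is where the count comes from, with no correction terms at all. The generators are then built explicitly as block-diagonal matrices with $2\times 2$ blocks $\left(\begin{smallmatrix}\omega & \frac{1}{k}a_{2i,2i+1}\\ 0 & 1\end{smallmatrix}\right)$ and $\left(\begin{smallmatrix}1 & \frac{1}{k}a_{2i-1,2i}\\ 0 & \omega^{-1}\end{smallmatrix}\right)$, so that $B$ and $C$ have order $k$ and $BC$ is \emph{visibly} coherent ($D_0=D_1=I_{\infty}$, $D_2=\sum_i E_{2i,2i}$, all higher $D_i=0$) with $J(BC)=\frac{1}{k}J$. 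Coherence of $A=(BC)^{k}$ then comes for free from Lemma \ref{lema4}, which also gives $J(A)=kJ(BC)=J$, cancelling the factor $\frac{1}{k}$ placed in the blocks. In short: where you try to assemble $A$ commutator by commutator and repair coherence afterwards, the paper chooses $B$ and $C$ so that coherence holds \emph{before} taking powers and lets Lemmas \ref{lema4} and \ref{lema6} do the rest. Without an analogue of that step, your outline cannot be completed as written.
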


First we proof the following:

\begin{lemma}\label{lema6} Suppose that $k>1$ and that $B,C$ are matrices of order $k$. Then $(BC)^k$ can be represented as products of $2k-3$ commutators all depending on the powers of $B$ and $C$.
\end{lemma}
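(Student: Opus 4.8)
The plan is to reduce everything to a single telescoping identity and then to a mechanical ``collection'' of the factor $C$ to the right, exactly as one collects generators in a nilpotent group. First I would record the elementary identity
\[
\prod_{j=1}^{m}\bigl(B^{j}CB^{-j}\bigr)=(BC)^{m}B^{-m},
\]
valid for every $m\ge 1$ and proved by a one-line induction: the case $m=1$ is $BCB^{-1}=(BC)B^{-1}$, and the inductive step only uses $B^{-m}B^{m+1}=B$ together with $(BC)^{m}B\cdot CB^{-(m+1)}=(BC)^{m+1}B^{-(m+1)}$. Setting $m=k$ and using $B^{k}=1$ (so that the last factor $B^{k}CB^{-k}$ is just $C$ and the trailing $B^{-k}$ disappears) turns this into
\[
(BC)^{k}=\Bigl(\prod_{j=1}^{k-1}B^{j}CB^{-j}\Bigr)C,
\]
i.e.\ $(BC)^{k}$ is a product of $k$ conjugates of $C$, each of which is a word in the powers of $B$ and $C$.

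Next I would convert this product of conjugates into a product of commutators. Writing each factor as $B^{j}CB^{-j}=[B^{j},C]\,C$ and then sliding every copy of $C$ to the far right (the standard collection process), the accumulated tail is exactly $C^{k}=1$, so it vanishes and one is left with
\[
(BC)^{k}=\prod_{j=1}^{k-1}\bigl[C^{\,j-1}B^{j}C^{-(j-1)},\,C\bigr].
\]
Each factor here is a genuine commutator depending only on the powers of $B$ and $C$, and there are $k-1$ of them; since $k-1\le 2k-3$ for every $k\ge 2$, this already gives the asserted bound, and for $k=2$ it collapses to the single commutator $[B,C]$, recovering the involution case of Hou in \cite{Hou}. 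The only points needing care in this step are bookkeeping: checking that with $t_{j}=[B^{j},C]$ the collection identity $t_{1}Ct_{2}C\cdots t_{k-1}C^{2}=\bigl(\prod_{j}C^{\,j-1}t_{j}C^{-(j-1)}\bigr)C^{k}$ holds, and that conjugation rewrites $C^{\,j-1}[B^{j},C]C^{-(j-1)}$ as the displayed commutator.

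The hard part, and the reason the bound is stated as $2k-3$ rather than $k-1$, is the order constraint coming from Theorem~\ref{th1}: there each commutator must be a commutator of two elements of order \emph{exactly} $k$, whereas the first entry $C^{\,j-1}B^{j}C^{-(j-1)}$ above is only a conjugate of $B^{j}$, hence of order $k/\gcd(j,k)$, which drops below $k$ as soon as $\gcd(j,k)>1$. To repair this I would avoid forming the high powers $B^{j}$ altogether and instead build $(BC)^{k}$ out of the ``defect'' commutators $[B,\,B^{j}CB^{-j}]=B^{j+1}CB^{-(j+1)}\,(B^{j}CB^{-j})^{-1}$, whose two entries $B$ and $B^{j}CB^{-j}$ both have order $k$. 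The naive way of doing this (expanding every $w_{j}=B^{j}CB^{-j}$ as a descending product of such defects and then collecting) keeps all entries of order $k$ but produces on the order of $\binom{k}{2}$ commutators, so the real work is an efficient assembly of these order-$k$ defects that reconstructs the ordered product $BCB^{-1}\cdots B^{k-1}CB^{-(k-1)}\,C$ using no more than $2k-3$ of them. Establishing precisely this count while keeping every entry of order $k$ is where I expect the main difficulty to lie.
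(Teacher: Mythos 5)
Your first two paragraphs already constitute a complete and correct proof of the lemma as stated, but by a genuinely different route from the paper's. The paper proceeds by recursion: it sets $F_2(B,C)=[B,C]$, $F_{i+1}(B,C)=F_i(B,C)[C^{i-1},B^i][B^i,C^i]$, proves by induction the identity $(BC)^i=F_i(B,C)\,C^{i-1}B^iC$, and then puts $i=k$, so that the tail $C^{k-1}B^kC=C^k=1$ disappears and $(BC)^k=F_k(B,C)$, a product of $1+2(k-2)=2k-3$ commutators of \emph{pure powers} $[B^i,C^j]$. Your telescoping-plus-collection argument instead yields $(BC)^k=\prod_{j=1}^{k-1}\bigl[C^{j-1}B^jC^{-(j-1)},C\bigr]$, i.e.\ only $k-1$ commutators, strictly fewer than $2k-3$ once $k\ge 3$. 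The trade-off is that the paper's factors have both entries equal to literal powers of $B$ or $C$, which matches the phrasing ``commutators of powers of two elements of order $k$'' used in Corollary \ref{corol77}, whereas your first entries are conjugates $C^{j-1}B^jC^{-(j-1)}$. This costs nothing downstream: since $C^{j-1}B^jC^{-(j-1)}=(C^{j-1}BC^{-(j-1)})^j$ and $C^{j-1}BC^{-(j-1)}$ has order $k$, each of your factors is still a commutator of a power of an order-$k$ element with $C$, and every later step in the paper (Lemma \ref{lema5}, Corollary \ref{corol77}, Theorem \ref{th1}) uses only properties stable under conjugation. Propagated through the paper, your count would even improve the headline bound from $4k-6$ to $2k-2$.

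Your third paragraph, however, worries about a constraint that is neither in the statement of Lemma \ref{lema6} nor achieved by the paper's own proof. The lemma asks only for commutators ``depending on the powers of $B$ and $C$''; it does not require each commutator to have both entries of order exactly $k$. The paper's own factors fail that stronger property just as yours do: for even $k\ge 4$ the word $F_k(B,C)$ contains $[B^2,C^2]$, and $B^2$ has order $k/2<k$; more generally $[B^i,C^i]$ appears for all $2\le i\le k-1$, with entries of order $k/\gcd(i,k)$. The mismatch you detected is a looseness in the paper's own phrasing---Theorem \ref{th1} announces ``commutators of elements of order $k$'' while what is actually proved there, via Corollaries \ref{corol77} and \ref{corol79}, is ``commutators of \emph{powers} of elements of order $k$''---and it is not something your proof of this lemma needs to repair. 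So the ``main difficulty'' you anticipate does not exist: you can delete the third paragraph, and what remains is a valid proof with a better constant.
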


\begin{proof} Define by recurrence:
$$F_2(B,C)=[B,C],$$ $$F_3(B,C)=F_2(B,C)[C,B^2][B^2,C^2]$$ and $$F_{i+1}(B,C)=F_{i}(B,C)[C^{(i-1)},B^i][B^i,C^i],$$ the we have  for example:

$$\begin{array}{rl} F_2(B,C)=& [B,C] \\ F_3(B,C)=&[B,C][C,B^2][B^2,C^2] \\ F_4(B,C)=&F_3(B,C)[C^2,B^3][B^3,C^3] \\
=&[B,C][C,B^2][B^2,C^2] [C^2,B^3][B^3,C^3]\\
F_5(B,C)=& F_4(B,C)[C^3,B^4][B^4,C^4] \\ 
 =&[B,C][C,B^2][B^2,C^2] [C^2,B^3][B^3,C^3][C^3,B^4][B^4,C^4], \end{array}$$
then we affirmate that 
\begin{equation}\label{eqcom}
(BC)^i=F_i(B,C)C^{i-1}B^iC.
\end{equation}
Its clear that this is true for $i=2$. Suppose that this is true for $i=h$, then, multiplicating by $(BC)$ in both sides of (\ref{eqcom}) we have:
$$\begin{array}{rl}
    (BC)^{h+1} &=(BC)^h(BC)  \\
     & =(F_h(B,C)C^{h-1}B^hC)(BC)\\
     &= F_h(B,C))C^{h-1}B^h(C^{-(h-1)}B^{-h}B^{h}C^{(h-1)})CBC \\
     &= F_h(B,C)[C^{h-1},B^h]B^{h}C^hBC \\
     &= F_h(B,C)[C^{h-1},B^{h}]B^{h}C^hB^{-h}C^{-h}C^hB^{h}BC \\
     &=F_h(B,C)[C^{h-1},B^{h}][B^{h},C^h]C^hB^{h}BC  \\
     &=F_{h+1}(B,C)C^hB^{(h+1)}C.
\end{array}$$
This proof the our affirmation (\ref{eqcom}). In the case that $i=k$ then we have that
$$(BC)^k=F_k(B,C)C^{k-1}B^kC=F_k(B,C),$$
this finalizing the proof.

\end{proof}
\begin{proof}[Proof of Lemma \ref{lema5}]
 Let $J=\displaystyle\sum^{\infty}_{i=1}a_{i,i+1}E_{i,i+1}\in J_{\infty}(R)$ and define
$$B=\left(\begin{array}{cccccc} 1 & 0 &  &  &  & \cdots \\ 0 & w & \frac{1}{k}a_{23} & &  &  \\  &  & 1& 0 &  &  \\  & & & w  & \frac{1}{k}a_{45} &  \\  &  &  &  &  & \ddots \end{array}\right)=diag \left(1,\left(\begin{array}{cc}w & \frac{1}{k}a_{23} \\ 0 & 1 \end{array}\right),\left(\begin{array}{cc}w & \frac{1}{k}a_{45} \\ 0 & 1 \end{array}\right),\cdots\right),$$
$$ \ \ = \sum^{\infty}_{i=1}\left(E_{2i-1,2i-1}+ wE_{2i,2i}+\frac{1}{k}a_{2i.2i+1}E_{2i,2i+1} \right) $$

$$C=\left(\begin{array}{cccccc} 1 &\frac{1}{k}a_{12} &  &  &  & \cdots \\ & w^{-1} & 0 & &  &  \\  &  & 1 & \frac{1}{k}a_{34} &  &  \\  & & & w^{-1}  &  &  \\  &  &  &  &  & \ddots \end{array}\right)=diag \left(\left(\begin{array}{cc}1 & \frac{1}{k}a_{12} \\ 0 & w^{-1} \end{array}\right),\left(\begin{array}{cc}1& \frac{1}{k}a_{34} \\ 0 & w^{-1} \end{array}\right),\cdots\right),$$

$$ \ \ = \sum^{\infty}_{i=1}\left(E_{2i-1,2i-1}+ w^{-1}E_{2i,2i}+\frac{1}{k}a_{2i-1,2i}E_{2i-1,2i} \right). $$
Observe that $B$ and $C$ are two matrices of order $k$. Using the Lemma \ref{lema6}, denote by $A=(BC)^k=F_k(B,C)$
then by lemma \ref{lema4} $J(A)=J((BC)^k)=kJ(BC)$. Observe that

$$BC=I_{\infty}+\frac{1}{k}\sum^{\infty}_{i=1}a_{i,i+1}E_{i,i+1}+\frac{1}{k^2}\sum^{\infty}_{i=1}a_{2i,2i+1}a_{2i+1,2i+2}E_{2i,2i+2}$$
And $BC$ is coherent with $D_0=D_1=I_{\infty}$, and $D_2=\sum^{\infty}_{i=1}E_{2i,2i}.$ This conclude the lemma.
\end{proof}

%%%%%%%%%%%%%%%%%%%%%%%%%%%%%%%%%%%5
%% LEMA PARA ut^K(r) Q NO FUNCIOONA!!.
%Finally, we adapted the lemma \ref{lemma3}:
%\begin{lemma}\label{lema7}
%Assume that $R$ is an associative ring with unity $1$ and that $1+1+\cdots+1=k$ is an invertible element in $R$. Let $A$, $B$ be coherent matrices of $UT^{(k)}_{\infty}(R)$ such that $diag(A)=diag(B)$ and $J(A)=J(B)$. Then, $A$ and $B$ are conjugated in $UT^{(k)}_{\infty}(R)$.\end{lemma}
%\begin{proof}
%Is similar to the proof of Lemma 2.4 in \cite{Hou}.
%\end{proof}

From the lemmas \ref{lemma3}, \ref{lema4} and \ref{lema5}, we obtain the following corollary
%\newline

%-CHEKA-----
%\NEWLINE
%AQUI CREO COLOCAR, CON ESTOS 3 COROLARIOS PROVAMOS EL PRIMER REULTADO... QUE YA LO COLOQUE EN LA INTRO, O LO DEJO COMO ESTA Y SACO DE LA INTRO .. VES ESO BIEN \\
%-------------------------------------------------------

\begin{corollary}\label{corol77}
Assume that $R$ is an associative ring with unity $1$ and that $1+1+\cdots+1=k$ is an invertible element of $R$. Every matrix in $UT_{\infty}(R)$ and $UT_n(R)$ $(n\in \mathbb{N})$, whose  entries except in the main diagonal and the first super diagonal are all equal to zero, is a product of $2k-3$ commutators of power of two elements of order $k$ in  $UT^{(k)}_{\infty}(R)$.
\end{corollary}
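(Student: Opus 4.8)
The plan is to read this as a direct assembly of the three preceding structural lemmas: the point is that any matrix of the stated shape is \emph{already} coherent, so Lemma \ref{lema5} furnishes an explicit conjugate with the correct commutator decomposition, and Lemma \ref{lemma3} lets us transport that decomposition back. First I would fix $M$ in $UT_\infty(R)$ (respectively in $UT_n(R)$) all of whose entries off the main diagonal and the first superdiagonal vanish, and set $J:=J(M)\in J_\infty(R)$ to be its first superdiagonal. Writing $M=I_\infty+J$, I observe that $M$ is automatically coherent: choosing $D_0=D_1=I_\infty$ and $D_i=0\in D_\infty(Z)$ for $i\geq 2$ gives $M=\sum_{i\geq 0}D_i\,J(M)^i$, which is exactly the defining (normalized) form of a coherent matrix. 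In particular $J(M)=J$.

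Next, by Lemma \ref{lema5} there is a coherent matrix $A\in UT_\infty(R)$ with $J(A)=J$ that is a product of $2k-3$ commutators, each depending only on the powers of two fixed matrices $B,C\in UT^{(k)}_\infty(R)$ of order $k$. Since $M$ and $A$ are both coherent and share the same first superdiagonal, $J(M)=J(A)=J$, Lemma \ref{lemma3} yields an element $g\in UT_\infty(R)$ with $M=gAg^{-1}$.

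It then remains to push the decomposition of $A$ through this conjugation. Conjugation by $g$ sends each factor $[X,Y]$, with $X,Y$ powers of $B,C$, to $[gXg^{-1},gYg^{-1}]$, and $g X g^{-1}$ is again a power of $gBg^{-1}$ or of $gCg^{-1}$; so the only thing to check is that $gBg^{-1}$ and $gCg^{-1}$ still have order $k$ and still lie in $UT^{(k)}_\infty(R)$. The order is preserved by part~1 of the Remark, and since $g$ is unitriangular the diagonal of any product of upper triangular matrices is the product of the diagonals, so $gBg^{-1}$ and $gCg^{-1}$ keep the same $k$th-root-of-unity diagonal entries and remain upper triangular, i.e. stay in $UT^{(k)}_\infty(R)$. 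Applying part~2 of the Remark factorwise, $M=gAg^{-1}$ is a product of $2k-3$ commutators of powers of the two order-$k$ elements $gBg^{-1},gCg^{-1}$, as claimed.

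Finally, the finite case $UT_n(R)$ follows by the identical argument once Lemmas \ref{lema4}, \ref{lema5} and \ref{lemma3} are read in their evident $n\times n$ versions: the matrices $B,C$ of Lemma \ref{lema5} are block diagonal with $2\times 2$ blocks and truncate directly to size $n$, while coherence and conjugacy are finite-dimensional statements there. The step I expect to demand the most care is precisely the last verification, namely that the conjugating element $g$ keeps both generators inside $UT^{(k)}_\infty(R)$ with their orders intact; the rest is bookkeeping of the cited lemmas and the trivial observation that superdiagonal unitriangular matrices are coherent.
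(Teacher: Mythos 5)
Your proposal is correct and follows essentially the same route as the paper's own proof: invoke Lemma \ref{lema5} to produce a coherent matrix with the given superdiagonal that carries the $2k-3$ commutator decomposition, then use Lemma \ref{lemma3} to conjugate it to the target matrix and transport the decomposition via the Remark. You in fact supply details the paper leaves implicit --- notably that the target matrix $I_\infty+J$ is itself coherent (with $D_i=0$ for $i\geq 2$), which is needed to apply Lemma \ref{lemma3}, and that conjugation by a unitriangular $g$ keeps the generators inside $UT^{(k)}_{\infty}(R)$ --- so no gap here.
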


\begin{proof}
Let $A\in UT_{\infty}(R)$, then by the Lemma \ref{lema5}, for $J(A)$ there is an element $B\in UT_{\infty}(R)$ such that $J(B)=J(A)$ and $B$ is the product of $2k-3$ commutators of powers of two elements of order $k$ in $UT^{(k)}_{\infty}(R)$ and by Lemma \ref{lemma3} we conclude that $A$ and $B$ are conjugated.  
\end{proof}

We enunciated the followings results adapted from \cite{Hou}:

\begin{lemma}\label{matrix}
Assume that $R$ is an associative ring with unity $1$ and let $n\in \mathbb{N}$:
\begin{itemize}
\item[1.] If $A,B$ are elements of $UT^{(k)}_{n}(R)$ such that $a_{i,i+1}=b_{i,i+1}=1$ for all $1\leq i\leq n-1,$ then $A$ and $B$ are conjugated in $UT^{(k)}_{n}(R)$.
\item[2.] If $A,B$ are elements of $UT^{(k)}_{\infty}(R)$ such that $a_{i,i+1}=b_{i,i+1}=1$ for all $1\leq i$ then $A$ and $B$ are conjugated in $UT^{(k)}_{\infty}(R)$.
\end{itemize}
\end{lemma}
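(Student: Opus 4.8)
The plan is to construct the conjugator explicitly. I look for $P\in UT^{(k)}_{n}(R)$ (respectively $P\in UT^{(k)}_{\infty}(R)$) with $PAP^{-1}=B$, i.e. $PA=BP$, and I take $P=I+Q$ with $Q$ strictly upper triangular; such a $P$ is unipotent and hence lies in $UT^{(k)}_{n}(R)$ because $1$ is a $k$th root of $1$. Since the $(i,i)$ entry of $PAP^{-1}$ equals $p_{ii}a_{ii}p_{ii}^{-1}$, conjugation fixes each diagonal entry up to conjugacy in $R$, so the statement forces the diagonals of $A$ and $B$ to match; this costs nothing in the use made of the lemma, where $A$ and $B$ are products of commutators and hence unipotent. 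I therefore first treat the unipotent case $A=I+N_A$, $B=I+N_B$ with $N_A,N_B$ strictly upper triangular and $(N_A)_{i,i+1}=(N_B)_{i,i+1}=1$. Here the two copies of $I$ cancel and $PA=BP$ becomes the intertwining identity
\[
QN_A-N_BQ=N_B-N_A.
\]

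The engine is to solve this identity one superdiagonal of $Q$ at a time. Reading off the $(i,i+d)$ entry and using $(N_A)_{i+d-1,i+d}=(N_B)_{i,i+1}=1$ one finds
\[
q_{i,i+d-1}-q_{i+1,i+d}+(\text{entries of }Q\text{ on superdiagonals }<d-1)=(N_B-N_A)_{i,i+d}.
\]
Thus the entries of $Q$ on the $m$th superdiagonal are governed by the equations at the positions $(i,i+m+1)$, which form a first-order recursion $q_{i+1,i+m+1}=q_{i,i+m}-r_i$ whose right-hand side $r_i$ depends only on superdiagonals already determined. Choosing the seed $q_{1,1+m}$ freely (say $0$) and propagating determines the whole superdiagonal; invertibility of the relevant coefficients — precisely the fact that the first-superdiagonal entries are units — is what lets the recursion step forward. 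Iterating over $m=1,2,\dots$ produces $Q$, and $P=I+Q$ is the required conjugator.

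This construction is purely local: each $q_{ij}$ is fixed by finitely many previously computed quantities, there is no compatibility condition to verify and no limit to control. Consequently the same procedure proves both part 1 and part 2 at once — in the infinite case each superdiagonal recursion simply runs forever while every entry stays well defined, yielding $P\in UT^{(k)}_{\infty}(R)$. The main obstacle is the extension to a general common diagonal $D=\mathrm{diag}(d_i)$ (the literal $UT^{(k)}$ phrasing): then the $I$'s no longer cancel and the $(i,i+d)$ equation acquires the term $q_{i,i+d}(d_{i+d}-d_i)$. When the differences $d_{i+d}-d_i$ are units this is solved immediately, but when $d_{i+d}=d_i$ the leading coefficient degenerates and one must absorb the right-hand side using the freedom left on lower superdiagonals. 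I would settle this by an induction on $n$ in the finite case that peels off the last row and column: after conjugating the top-left block to that of $B$, matching the last column reduces to solving $(a_{nn}I-B')v=c_B-P'c$, and the normalization $a_{i,i+1}=1$ is exactly what guarantees this system is compatible with the possibly singular operator $a_{nn}I-B'$. Checking that compatibility across all diagonal configurations is the delicate step; everything else is the routine telescoping above.
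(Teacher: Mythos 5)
Your core construction is essentially the paper's own proof. The paper likewise builds an explicit conjugator entry by entry: it seeks $X$ with $AX=XJ$, where $J$ carries the diagonal of $A$ and ones on the first superdiagonal, and fills in $X$ one superdiagonal at a time, the unit entries $a_{i,i+1}=1$ driving the same seeded first-order recursion that you use; the only structural difference is that the paper conjugates $A$ (and implicitly $B$) to a common canonical form $J$, while you pass directly from $A$ to $B$, which is immaterial. In the constant-diagonal (unipotent) case your argument is complete and correct, in both the finite and the infinite setting, and that is the only case the paper ever uses: Corollary \ref{corol79} applies the lemma to elements of $UT_n(R)$ and $UT_{\infty}(R)$, whose diagonal entries are all equal to $1$.

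Concerning the general case that you leave open: your caution is not a defect of your proof but a defect of the lemma. As stated, the lemma is false, even over $\mathbb{C}$ with $k=2$ and with matching diagonals. Take
\[
A=\begin{pmatrix}1&1&-\tfrac12\\0&-1&1\\0&0&1\end{pmatrix},\qquad
B=\begin{pmatrix}1&1&0\\0&-1&1\\0&0&1\end{pmatrix}.
\]
Both lie in $UT^{(2)}_{3}(\mathbb{C})$ and have first superdiagonal $(1,1)$, yet $\operatorname{rank}(A-I)=1\neq 2=\operatorname{rank}(B-I)$, so $A$ and $B$ are not conjugate even in $GL_3(\mathbb{C})$, let alone in $UT^{(2)}_3(\mathbb{C})$. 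The failure occurs exactly at the compatibility issue you flagged: when equal diagonal entries recur non-consecutively (here $a_{11}=a_{33}\neq a_{22}$), the degenerate equation at position $(1,3)$ becomes a constraint on already-determined data and can be unsatisfiable. The paper's proof overlooks this --- it declares $x_{ij}$ free whenever $a_{ii}=a_{jj}$ without checking the induced constraint --- and it also inverts $a_{ii}-a_{jj}$ whenever $a_{ii}\neq a_{jj}$, which need not be a unit in a general ring. So your restricted, fully proved version (constant diagonal) is both the true statement and the statement the paper actually needs, and the extension you sketch in your last paragraph cannot be completed as stated.
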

\begin{proof}
Consider $A=(a_{ij})$ and $$J=\left(\begin{array}{ccccc} a_{11} &1 &  &   & \\ & a_{22} & 1 & & \\  &  & a_{33} & 1  &    \\ & &  &\ddots & \ddots    \end{array}\right).$$
All blank entries are equal to $0$. We need only to prove that $A$ is conjugated to the matrix $J$, for this we can constructed a matrix $X=(x_{ij})\in UT^{(k)}_{\infty}(R)$ such that $X^{-1}AX=J$ or $AX=XJ$. For the first super diagonal entries of $X$ we choose 
\begin{itemize}
    \item [a.)] In the case that $a_{ii}=a_{i+1,i+1}$ then $x_{i,i+1}$ can be choose any element in $R$.
    \item[b.)] In the case that $a_{ii}\neq a_{i+1,i+1}$ then $x_{i,i+1}= 1$.
\end{itemize}
When we have first for the second super diagonal entrieswe can choose
\begin{itemize}
    \item [a.)] If $a_{ii}=a_{i+2,i+2}$ the $x_{i,i+2}$ can be choose any element in $R$.
    \item[b.)] In the case that $a_{ii}\neq a_{i+2,i+2}$ then $$x_{i,i+2}=[a_{ii}-a_{i,i+2}]^{-1}\{x_{i,i+1}-x_{i+1,i+2}-a_{i,i+2}x_{i+2,i+2}\}.$$
\end{itemize}
When we have first, second, $\cdots$, $(j-1)-th$ super diagonal entries, to obtain the element $x_{ij}$ he can choose

\begin{itemize}
    \item [a.)] If $a_{ii}=a_{jj}$ then choose $x_{ii}=x_{jj}$ and $x_{ij}$ can be choose any element in $R$.
    \item[b.)] If $a_{ii}\neq a_{jj}$ then choose $x_{ii}\neq x_{jj}$ and $$x_{ij}=[a_{ii}-a_{jj}]^{-1}\{ x_{i,j-1}-\displaystyle\sum^{k=j-i}_{k=2} a_{i,i+k}x_{i+k,j}\}$$
\end{itemize}
Thus the Lemma \ref{matrix} is proved.
\end{proof}

Also:

%YA TA ESTO...CHEKA:
%\\
%-----------------
%-----------------
%VER ESTE COROLARIO YA ES RESULTADO DE ORDEN K Y HOU NO HACE ESO, LO PUSE IGUAL EN LA INTRO, SE PODERIA PONER AQUI CON ESTE LEMA TEMOS PROVAMOS O RESULTADO  Q ESTA EN LA INTRO
%\newline
%FIJATE BIEN EL ENUNCIADO DE LOS COROLARIOS 3.8 Y 3.11, EN UNO PONES $UT^{(k)}_{\infty}(R)$ and $UT^{(k)}_n(R)$ Y EN EL OTRO PONES SIN LA POTENCIA k
%---------------------------------------------------

\begin{corollary}\label{corol79}
Assume that $R$ is an associative ring with unity $1$ and that $1+1+\cdots+1=k$ is an invertible element of $R$. Every matrix in $UT_{\infty}(R)$ and $UT_n(R)$ $(n\in\mathbb{N})$, whose entries in the first super diagonal are all equal to the unity $1$, is a product of $2k-3$ commutators of powers of two elements of order $k$ in $UT^{(k)}_n(R)$ or $UT^{(k)}_{\infty}(R)$ .
\end{corollary}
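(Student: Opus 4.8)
The plan is to reduce the general situation---arbitrary entries above the diagonal, but with $1$'s all along the first superdiagonal---to the strictly banded case already settled in Corollary~\ref{corol77}, by means of a single conjugation, and then to transport the commutator decomposition across that conjugation using the Remark of the Preliminaries (conjugation sends an element of order $k$ to an element of order $k$, hence a product of $r$ order-$k$ commutators to another such product).

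First I would fix the ``model'' matrix $N = I_{\infty} + \sum_{i\ge 1} E_{i,i+1}$ (and, in the finite case, $N_n = I_n + \sum_{i=1}^{n-1} E_{i,i+1}$), whose only nonzero entries sit on the main diagonal and on the first superdiagonal. Since every entry of $N$ off the main diagonal and first superdiagonal vanishes, Corollary~\ref{corol77} applies verbatim and exhibits $N$ as a product $N = \prod_{s=1}^{2k-3}[\beta_s,\gamma_s]$, where each $\beta_s,\gamma_s$ is a power of one of two fixed matrices $B,C \in UT^{(k)}_{\infty}(R)$ of order $k$.

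Next, let $A \in UT_{\infty}(R)$ satisfy $a_{i,i+1}=1$ for all $i$. Because the diagonal entries of $A$ are all equal to $1$, which is a $k$th root of unity, one has $A \in UT^{(k)}_{\infty}(R)$; likewise $N \in UT^{(k)}_{\infty}(R)$, and both matrices carry $1$ on the entire first superdiagonal. This is precisely the hypothesis of Lemma~\ref{matrix}, which then produces a matrix $X \in UT^{(k)}_{\infty}(R)$ with $X^{-1}AX = N$, that is, $A = XNX^{-1}$. Conjugating the above decomposition by $X$ gives $A = \prod_{s=1}^{2k-3}[X\beta_s X^{-1}, X\gamma_s X^{-1}]$, and since $X\beta_s X^{-1}$ and $X\gamma_s X^{-1}$ are powers of the order-$k$ matrices $XBX^{-1}, XCX^{-1} \in UT^{(k)}_{\infty}(R)$, this realizes $A$ as a product of $2k-3$ commutators of powers of two elements of order $k$, as required. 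The finite case would be handled identically, replacing $N$ by $N_n$ and using the finite statements of Corollary~\ref{corol77} and Lemma~\ref{matrix}.

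The substantive work has already been carried out in the earlier lemmas; the only point demanding attention is bookkeeping at the group-theoretic level. One must make sure that the conjugator $X$ supplied by Lemma~\ref{matrix} genuinely lies in $UT^{(k)}_{\infty}(R)$ (and not merely in $T_{\infty}(R)$), for otherwise the Remark could not be invoked and the conjugated factors might fail to be commutators of order-$k$ elements. Since Lemma~\ref{matrix} delivers its conjugator inside $UT^{(k)}_{\infty}(R)$, this potential obstacle does not in fact arise, and the argument closes.
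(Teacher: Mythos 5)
Your proof is correct and takes essentially the same route as the paper: conjugate $A$ via Lemma \ref{matrix} to the model matrix with $1$'s on the diagonal and first superdiagonal, decompose that matrix using Corollary \ref{corol77}, and transport the decomposition through the conjugation. Your extra attention to the conjugator lying in $UT^{(k)}_{\infty}(R)$ is a detail the paper leaves implicit (and is in fact automatic, since conjugation by any element of $T_{\infty}(R)$ preserves order $k$ and upper-triangularity), but it does not alter the argument.
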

\begin{proof}Consider $A\in UT_{\infty}(R)$, then by the Lemma \ref{matrix} $A$ is conjugated to matrix $J=J(A)$, and by the Corollary \ref{corol77}, $J$ is product of $2k-3$ commutators of matrices of order $k$. 
\end{proof}

Then, with this results, we show the Theorem \ref{th1}:
%\begin{theorem}\label{teo1}
%Assume that $R$ is an associative ring with unity $1$ and that %$1+1+\cdots+1=k$ is an invertible element of $R$. Then every element of %the group $UT^{(k)}_{\infty}(R)$ and $UT^{(k)}_n(R)$ $(n\in \mathbb{N})$ %can be expressed as a product of at most $4k-6$ commutators of elements %of order $k$ in $UT^{(k)}_{\infty}(R)$ and $UT^{(k)}_n(R)$, respectively.
%\end{theorem}

\begin{proof}[Proof of Theorem \ref{th1}]
Let $A$ be an arbitrary matrix of $UT_{\infty}(R)$. We can write
$$A=I_{\infty}+ \displaystyle \sum^{\infty}_{i=1} \sum^{\infty}_{j=i+1}a_{i,j}E_{i,j} \ \in UT^{(k)}_{\infty}(R),$$
and consider the following matrix

$$B=I_{\infty}+\sum^{\infty}_{i=1}(a_{i,i+1}-1)E_{i,i+1} \in UT_{\infty}(R).$$
By Corollary \ref{corol77}, $B$ is a product of $2k-3$ commutators of two matrices of order $k$ in $UT^{(k)}_{\infty}(R)$. Note that 
$$C=B^{-1}A\in UT_{\infty}(R),$$ is a matrix whose entries in the main diagonal and the first super diagonal are all equal to $1$. By the Corollary  \ref{corol79}, $C$ is also a product of $2k-3$ commutators of powers of elements of order $k$ in $UT^{(k)}_{\infty}(R)$. Finally, $A=BC$ is a product of $4k-6$ commutators of powers of four elements of order $k$ in $UT^{(k)}_{\infty}(R)$.

\end{proof}

\section{Case $R=\mathbb{K}$ a complex field or the real field}

Consider $R=\mathbb{K}$ be a complex field or the real number field, then we show the Theorem \ref{th2}: 

%\begin{theorem}\label{theorem1}
%All element in $SL_n(\mathbb{K})$ can be written as a product of at most %$4k-6$ commutators of elements of order $k$ in $GL_n(\mathbb{K}).$
%\end{theorem}
\begin{proof}[Proof of Theorem \ref{th2}]
The case $k=2$ is proved in \cite{Hou}. Suppose that $k\geq 3$. Consider $A\in SL_n(\mathbb{K}) $ not a scalar matrix, then by \cite{Sourour}, Theorem 1, we can find a lower-triangular matrix $L$ and a upper-triangular matrix $U$ such that $A$ is similar to $LU$, and both $L$ and $U$ with all entries on the main diagonal equal to 1. By the corollary \ref{corol79} it follows that each of the matrices $L$ and $U$ is a product of $2k-3$ commutators of powers of two elements of order $k$ from $SL_n(\mathbb{K})$. The scalar case $A=\alpha I$ with $det(A)=1$ it suffices to consider the case when $n$ is exactly the order of $\alpha$ (see \cite{Hou}). Next we use the ideas from Grunenfelder \cite{Grunenfelder}. Observe that for $a\in \mathbb{K}$, $a\notin \{0,1,-1\}$ then
 %Is proved in Gargate \cite{Gargate}. AQUI A VERDADEIRA PROVA DEL ARTIGO..MUDAR ALLA EH!
 $$\left[\begin{array}{cc}a & 0 \\ 0 & a^{-1}\end{array}\right]= J_1(a)\cdot J_2(a)$$
 with
 $$J_1(a)=\left[\begin{array}{cc}\displaystyle\frac{at}{a+1} & a-\left(\displaystyle\frac{at}{a+1}\right)^2 \\
 -\displaystyle\frac{1}{a} & \displaystyle\frac{t}{a+1}\end{array}\right] \ and \  J_2(a)=\left[\begin{array}{cc}\displaystyle\frac{at}{a+1} & a\left(\displaystyle\frac{at}{a+1}\right)^2-1 \\ 
		1 & \displaystyle\frac{t}{a+1}\end{array}\right],$$
 where $t=\theta+\theta^{-1}$, $\theta^k=1$, $\theta\neq 1$. We have that $J_1(a)^k=J_2(a)^k=I$. Then, if $n$ is even we can express $\alpha I$ as a product of two matrices:

$$\alpha I=\left[\begin{array}{cccccc}
\alpha &             & & & & \\
       & \alpha^{-1} & & & & \\
  & & \alpha^3 & &  & \\
  & & &  \ddots & & \\
   & & &         &  \alpha^{n-1} & \\
    & & &        &  & \alpha^{-n+1}\end{array}\right] \left[\begin{array}{cccccc}
1 &             & & & & \\
       & \alpha^{2} & & & & \\
  & & \alpha^{-2} & &  & \\
  & & &  \ddots & & \\
   & & &         &  \alpha^{-n+2} & \\
    & & &        &  & 1\end{array}\right].$$
 
 And in the case that $n$ is odd
 
 $$\alpha I=\left[\begin{array}{cccccc}
\alpha &             & & & & \\
       & \alpha^{-1} & & & & \\
  & & \alpha^3 & &  & \\
  & & &  \ddots & & \\
   & & &         &  \alpha^{-n+2} & \\
    & & &        &  & 1\end{array}\right] \left[\begin{array}{cccccc}
1 &             & & & & \\
       & \alpha^{2} & & & & \\
  & & \alpha^{-2} & &  & \\
  & & &  \ddots & & \\
   & & &         &  \alpha^{n-1} & \\
    & & &        &  & \alpha^{-n+1}\end{array}\right].$$
 
 Denote by $F$ and $G$ the first and second matrix that appears in these decompositions. If $n$ is even, for example, $F=diag(\alpha,\alpha^{-1},\alpha^3,\cdots, \alpha^{n-1},\alpha^{-n+1})$ and define the matrix $S=diag(\alpha^{1/k},\alpha^{-1/k},\alpha^{3/k},\cdots, \alpha^{(n-1)/k},\alpha^{(-n+1)/k}).$ Then, each block $2\times 2$ of the form $diag(\alpha^{j/k},\alpha^{-j/k})$ is product of $J_1(\alpha^{j/k})$ and $J_2(\alpha^{j/k})$, both of order $k$, so define $J_1$ and $J_2$ as the block diagonal matrices with entries $J_1(\alpha^{j/k})$ and $J_2(\alpha^{j/k})$ respectively, then $F=S^k=(J_1\cdot J_2)^k$. By the Lemmas \ref{lema6} and \ref{lema5} we have that $F$ is product of $2k-3$ commutators of powers of $J_1$ and $J_2$. Similarly we can obtain the same results for the matrix $G$. Therefore, by the decomposition, we conclude that $\alpha I_n$ is a product of $4k-6$ commutators. The case that $n$ is odd is analogue.
\end{proof}

Consider the $GL_{VK}(\infty,\mathbb{K})$ the Vershik-Kerov group consisting of all infinite matrices of the form
\begin{equation}\label{eq111}
\left(\begin{array}{c|c}M_1 & M_2 \\ \hline 0 & M_3 \end{array}\right)
\end{equation}
where $M_1\in GL(n,\mathbb{K})$ for some $n\in \mathbb{N}$ and $M_3\in T_{\infty}(\mathbb{K}).$ 

And we can use the following lemma (see \cite{Hou}):

\begin{lemma}
Assume that $\mathbb{K}$ is a complex field or the real number field. Let $A\in GL_n(\mathbb{K})$ of which $1$ is no eigenvalue, and let $T$ be an infinite unitriangular matrix. In the Vershik-Kerov group, any matrix of the form 
\begin{equation}\label{eq112}
\left(\begin{array}{c|c}A & B \\ \hline 0 & T \end{array}\right)
\end{equation} 
is conjugated to 
\begin{equation}\label{eq113}
\left(\begin{array}{c|c}A & 0 \\ \hline 0 & T \end{array}\right)
\end{equation}
\end{lemma}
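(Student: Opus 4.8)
The plan is to realize the conjugacy by a single block-unitriangular element of the Vershik--Kerov group and to reduce the killing of the corner block $B$ to a Sylvester-type equation. Concretely, I would search for a conjugator
$$P=\left(\begin{array}{c|c} I_n & X \\ \hline 0 & I\end{array}\right),$$
with $X$ an $n\times\infty$ block to be chosen; since its top-left block is $I_n\in GL_n(\mathbb{K})$ and its bottom-right block is the infinite identity, which lies in $T_\infty(\mathbb{K})$, the matrix $P$ belongs to $GL_{VK}(\infty,\mathbb{K})$, and its inverse is obtained by replacing $X$ with $-X$. A direct block computation gives
$$P\left(\begin{array}{c|c}A & B \\ \hline 0 & T\end{array}\right)P^{-1}=\left(\begin{array}{c|c}A & B+XT-AX \\ \hline 0 & T\end{array}\right),$$
so the whole problem is reduced to finding $X$ with $AX-XT=B$.

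To solve this equation I would exploit the spectral separation built into the hypotheses. Write $T=I+N$, where $N$ is strictly upper triangular on the index set $\{n+1,n+2,\dots\}$; the equation then becomes $(A-I)X-XN=B$. Because $1$ is not an eigenvalue of $A$, the $n\times n$ matrix $M=A-I$ is invertible, so the equation is equivalent to the fixed-point relation $X=M^{-1}B+M^{-1}XN$, whose formal solution is the Neumann-type series
$$X=\sum_{j=0}^{\infty}M^{-(j+1)}BN^{j}.$$

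The step I expect to be the main obstacle is verifying that this formal series genuinely defines an infinite matrix, since in infinite dimensions a Sylvester equation need not admit a matrix solution. Here it does, precisely because the strict triangularity of $N$ localizes its powers: $(N^{j})_{ab}=0$ unless $b\geq a+j$, so in any fixed column $q$ the block $BN^{j}$ vanishes as soon as $j>q-n-1$. Hence each entry of $X$ is a finite sum and $X$ is a bona fide $n\times\infty$ block, which makes $P$ a legitimate element of the Vershik--Kerov group. With this $X$ the corner block $B+XT-AX$ is zero by construction, so $P$ conjugates the matrix (\ref{eq112}) to the block-diagonal matrix (\ref{eq113}), completing the argument. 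The role of the two hypotheses is now transparent: the eigenvalue condition on $A$ and the unitriangularity of $T$ force the two spectra, one avoiding $1$ and the other equal to $\{1\}$, to be disjoint, and it is exactly this disjointness, together with the shift property of $N$, that guarantees a well-defined solution.
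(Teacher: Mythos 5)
Your proposal is correct. Note that the paper itself gives no proof of this lemma at all --- it is quoted verbatim from Hou's paper \cite{Hou} with a citation in place of an argument --- so your write-up actually supplies the missing details. The argument you give is the standard (and essentially Hou's) one: conjugate by the block-unitriangular element $P=\left(\begin{smallmatrix} I_n & X \\ 0 & I\end{smallmatrix}\right)$ of the Vershik--Kerov group and reduce to the Sylvester equation $AX-XT=B$; your Neumann series $X=\sum_{j\geq 0}(A-I)^{-(j+1)}BN^{j}$, which is entrywise a finite sum in each column by strict upper-triangularity of $N$, is equivalent to solving that equation column by column via the recursion $(A-I)x_q=b_q+\sum_{a<q}x_aT_{aq}$, and both hinge on exactly the two hypotheses you identify: invertibility of $A-I$ and unitriangularity of $T$.
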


Then, we shown the Theorem \ref{th3}

%\begin{theorem}Assume that $\mathbb{K}$ is a complex field or the real %number field. Then every element of the
%group $SL_{VK}(\infty, \mathbb{K})$ can be expressed as a product of at %most $4k-6$ commutators of two elements of order $k$ in
%$GL_{VK}(\infty, \mathbb{K})$.
%\end{theorem}
\begin{proof}[Proof of Theorem \ref{th3}]  Consider $M\in SL_{VK}(\mathbb{K})$ in the form $M=\left(\begin{array}{c|c} M_1 & M_2  \\ \hline 0 & M_3\end{array}\right)$, with $M_1\in SL_n(\mathbb{K})$ and $M_3\in UT_{\infty}(\mathbb{K})$. From the proof of Theorem 1.3 in \cite{Hou}, $M$ is conjugated to an infinite matrix of the form $\left(\begin{array}{c|c} A & 0  \\ \hline 0 & T\end{array}\right)$, with $A\in SL_n(\mathbb{K})$ for which $1$ is no eigenvalue and $T \in UT_{\infty}(\mathbb{K})$. By the Theorem \ref{th1} and the Theorem \ref{th2}, both are products of $4k-6$ commutators of elements of order $k$ and we know that the direct sum of $A$ and $T$ is also a product of $4k-6$ commutators of elements of order $k$ then this shows that $M$ is also a product of $4k-6$ commutators of elements of order $k$.
\end{proof}

\bigskip

\end{document}